
\documentclass[letterpaper, 10 pt, conference]{ieeeconf}  

\IEEEoverridecommandlockouts                              
\overrideIEEEmargins


\title{\LARGE \bf
Nonlinear Dual control based on Fast Moving Horizon estimation and Model Predictive Control with an observability constraint
}


\usepackage[autostyle]{csquotes}
\usepackage{amsmath,verbatim,amssymb,amsfonts,amscd, graphicx}
\usepackage{mathabx}
\usepackage{breqn}
\usepackage{float}
\usepackage{graphics}
\usepackage{stmaryrd} 
\usepackage{textcomp}
\usepackage{setspace}
\usepackage{algorithm}
\usepackage{algpseudocode}
\usepackage{optidef}

\author{Emilien Flayac$^{1}$,  Girish Nair$^{1}$, Iman Shames $^{2}$, 
    \thanks{$^{1}$ Emilien Flayac (emilien.flayac@unimelb.edu.au) and Girish Nair (girish.nair@unimelb.edu.au) are with the Department of Electrical \& Electronic Engineering, University of Melbourne, Melbourne, Australia }%
    \thanks{$^{2}$Iman Shames (iman.shames@anu.edu.au) is with the  School of Engineering, The Australian National University, Canberra, Australia.}%
    \thanks{This work received funding from the Australian Government, via grant AUSMURIB000001 associated with ONR MURI grant N00014-19-1-2571}
}

\newtheorem{proposition}{Proposition}[section]
\newtheorem{definition}{Definition}[section]
\newtheorem{assumption}{Assumption}[section]
\newtheorem{theorem}[proposition]{Theorem}
\newtheorem{corollary}[proposition]{Corollary}
\newtheorem{lemma}[proposition]{Lemma}
\newtheorem{remark}{Remark}[section]

\setcounter{secnumdepth}{3}
\begin{document}

\allowdisplaybreaks

\maketitle
\thispagestyle{empty}
\pagestyle{empty}

\begin{abstract}
This paper proposes an algorithm that combines Fast Moving Horizon Parameter Estimation and Model Predictive Control subject to an observability constraint designed to ensure a lower bound on the performance of the parameter estimator. Output-feedback stability is proved through input-to-state stability of the state/error system under a small noise and initial error assumption. Numerical experiments have been carried out in the case of Active Simultaneous Localisation and Mapping (SLAM).  
\end{abstract}

\section{INTRODUCTION}
Optimisation-based estimation techniques like Moving Horizon Estimation (MHE), in which one tries to recover the trajectory of a system through solving an optimisation problem, are arousing growing interests in both their theory \cite{muller_nonlinear_2017}, \cite{rawlings_optimization-based_2012} and their practical implementations \cite{diehl_efficient_2009}.  System identification is benefiting from these advances too, see \cite{kuhl_real-time_2011} for example. The main advantage of MHE is that it can deal with nonlinear systems and constraints while also aiming for tractability. Indeed, only the measurements coming from a time window of fixed size are used at each time step. A variation of MHE, called Fast  MHE, involves solving the associated optimisation problem partially. One only performs a few iterations of a dedicated optimisation routine at each time step and uses the current estimate as the initial guess for the next problem. Much work has been done in this direction, see \cite{alessandri_moving-horizon_2016}, \cite{alessandri_fast_2017}, \cite{kang_moving_2006}, \cite{wynn_convergence_2014} and \cite{zavala_stability_2010}. In all this work, the MHE scheme is enabled by the so-called \emph{$N$-step observability} condition. It states that a small output error on a rolling time window must lead to a small estimation error uniformly with respect to time if one starts sufficiently close to the reference. These conditions are also often assumed to hold uniformly with respect to the input applied to the system. However, in the general nonlinear case, the input might have an influence on the observability condition and thus the quality of the estimation process.  In adaptive control, where one tries to regulate a system while also identifying its dynamical model, this phenomenon has been well known since the seminal work of Feldbaum \cite{feldbaum_theory_1960}.  He stated that adaptive controllers must be designed  to guide the system in a standard way, as well as to probe information and excite the system, which leads to good parameter estimation. This is known as the \emph{dual effect} of the controller. See \cite{mesbah_stochastic_2017} for a survey. It implies that the \emph{separation principle}, which claims that independent design of the control and estimation schemes can be efficient, does not hold in a general nonlinear case.  In the context of Moving Horizon Estimation, some work has been done to combine it with Model Predictive Control (MPC). In the manner of MHE, MPC consists in solving a finite-horizon optimal control problem on a rolling basis. In \cite{copp_nonlinear_2014}, a minimax MHE-MPC output feedback scheme is presented  although the observability assumption is assumed to hold independently of the feedback scheme. In \cite{ellis_robust_2014}, the authors successfully mix an Economic MPC scheme with a specific MHE technique that requires a high-gain observer under feedback. However, the construction of such a feedback is not specified in general and seems non-trivial. In \cite{bruggemann_model_2020},  a MPC with a Persistence of Excitation condition is combined with a recursive parameter estimation algorithm. Still, the influence of the parameter on the dynamics is supposed to be affine and a periodic persistent state trajectory is also required. That is why, in this paper, we present a dual output-feedback scheme for nonlinear discrete-time systems with noisy measurements, based on a fast MHE algorithm for parameter estimation and on an MPC with a constraint on the predicted \emph{Observability Grammian}. The design of the controller and the estimator are coupled in the sense that, at each time step, the MPC algorithm aims to ensure good estimation performance at the following step through an appropriate enforced observability condition. Contrary to \cite{bruggemann_model_2020}, our method does not require periodicity or any explicit property as the required excitation coming from the input is generated by a general implicit constraint.     

The remainder of the paper is structured as follows. Section \ref{section_notation} gathers standard notations that will be used the entire paper. Section \ref{section_setup} describes the setup of a nonlinear parametrised discrete-time system with noisy measurements. Section \ref{section_MHPE} presents a typical Moving Horizon Parameter Estimation Problem and its fast implementation. In Section \ref{section:obs_control}, our dual MPC scheme with an observability constraint is set. In Section \ref{section:output-feedback_stab}, output-feedback stability is established in terms of input-to-state stability of the state/error system  under a small noise and initial error assumption. Finally, our estimation and control scheme is applied in Section \ref{section:application} to the problem of Active Simultaneous Localisation and Mapping (SLAM).

\section{NOTATIONS}\label{section_notation}
We respectively denote by $\mathbb{N}$ and $\mathbb{N}^*$ the set of non-negative and positive integers.  For some $n\in\mathbb{N}^*$ and $m\in\mathbb{N}^*$  and $x \in \mathbb{R}^n$, $\Vert x\Vert$ denotes the Euclidian norm of $x$. For  $x \in \mathbb{R}^n$ and $R>0$,  $\widebar{B}(x,R)$ denotes the closed ball of radius $R$ centered at $x$. For $L\in\mathbb{N}^* $ and a finite sequence of vectors $X\in {(\mathbb{R}^n)}^L$, $\Vert X\Vert=\max(\Vert X_1 \Vert,\dots, \Vert X_L \Vert)$. For a bounded infinite sequence of vectors $X\in(\mathbb{R}^n)^{\mathbb{N}}$, $\Vert X \Vert_{\infty}$ denotes its $\ell_{\infty}$-norm.  For a linear operator $A$ from $\mathbb{R}^n$ to $\mathbb{R}^m$, $\Vert A \Vert$ denotes the operator norm induced by the Euclidian norm. For $i\in \mathbb{N}^*$, a function $F:\mathbb{R}^n\rightarrow \mathbb{R}^m$ is called a $\mathcal{C}^i$ function if it is $i^{th}$-times continuously differentiable. Its $i^{th}$-differential is denoted by $\nabla^{i}F$ with the simplification $\nabla^{1}F=\nabla F$. In the sequel, $\succeq$ denotes the partial order on positive semi-definite matrices and $I$ denotes the identity matrix.  A function $\alpha: \mathbb{R}^+\rightarrow \mathbb{R}^+$ is called a $\mathcal{K}$-function if it is continuous, increasing and such that $\alpha(0)=0$. It is called a $\mathcal{K}_\infty$-function if it is also unbounded. A function $\beta: \mathbb{R}^+\times\mathbb{R}^+\rightarrow \mathbb{R}^+$ is called a $\mathcal{KL}$-function if for any $t\geq0$, $\beta(\cdot,t)$ is a  $\mathcal{K}$-function and if for any $r\geq0$, $\beta(r,\cdot)$ is decreasing and converges to $0$ at infinity.

\section{PROBLEM SETUP} \label{section_setup}
Consider the following nonlinear discrete-time partially observed dynamical system, defined for any $t\geq0$:
\begin{align}
    &x_{t+1}=f(x_t,u_t,\bar{p}),\label{eq:gen_dyn_sys}
    &y_t^0=\begin{bmatrix} x_t^0 \\y_t\end{bmatrix},
\end{align}
where $x_0\in \mathbb{R}^{n_x}$ is given,  $x_t^0=x_t+v^0_t$ and $y_t=h(x_t,\bar{p})+v_t$. Moreover, $x_t$ is a state variable valued in $\mathbb{R}^{n_x}$, $\bar{p}$ is an unknown parameter valued in a given set $\mathcal{P}\subset \mathbb{R}^{n_p}$ and  $u_t$ is a control variable valued in a given set $\mathcal{U}\subset \mathbb{R}^{n_u}$. Note that $x_t^0$ is a measurement of the state, $y_t$ is an observation valued in $\mathbb{R}^{n_y}$ involving also the parameter $\bar{p}$ and $v_t$ and $v_t^0$ are unmodelled bounded disturbances on the observations respectively valued in $\mathbb{R}^{n_y}$ and $\mathbb{R}^{n_x}$. The  dynamics of the system $f:\mathbb{R}^{n_x}\times \mathbb{R}^{n_u}\times  \mathbb{R}^{n_p}\rightarrow \mathbb{R}^{n_x} $  is such that $f(0,0,\bar{p})=0$ and $h:\mathbb{R}^{n_x}\times  \mathbb{R}^{n_p}\rightarrow \mathbb{R}^{n_y}$ is the observation function.

\begin{assumption}\label{as:regularity+compact_control_set}
Functions $f$ and $h$ are $\mathcal{C}^3$ functions, $\mathcal{U}$ is a compact set and  $\mathcal{P}$ is a closed convex set.
\end{assumption}

\begin{assumption}\label{as:bounded_noise}
 There exists $\nu>0$, such that  for any $t \geq 0$, $\Vert v_t\Vert\leq\nu$ and , $\Vert v_t^0\Vert\leq\nu$.
\end{assumption} 
In the rest of the paper, we assume that Assumption  \ref{as:bounded_noise} holds and $\nu$ is given.
\section{MOVING HORIZON PARAMETER ESTIMATION }\label{section_MHPE}
\subsection{Setup}
In this section, we present a perturbed and unperturbed Moving Horizon Parameter Estimation (MHPE) and show in Proposition \ref{prop:existence_uniqueness} the existence and uniqueness of the solutions of these problems as well as a bound between their optimiser that holds under an observability condition. Fix $L\in \mathbb{N}^*$. In the following, for any $t \geq 0$, any disturbance sequence $v_{t,L}=(v_{t-L+1},\dots,v_t) \in (\mathbb{R}^{n_y})^L$, any control sequence $u_{t,L}=(u_{t-L+1},\dots,u_{t-1}) \in (\mathbb{R}^{n_u})^{L-1}$, any starting state $x_{t-T+1} \in \mathbb{R}^{n_x}$, and any parameter $p\in \mathbb{R}^{n_p}$, we define the  cumulative output error, $C$ as follows: 
\begin{align*}
    C(p,x_{t-L+1},u_{t,L},v_{t,L})=\sum_{k=t-L+1}^t\Vert y_k-h(x_k,p) \Vert^2+\theta(p),
\end{align*}
where $\theta$ is a penalty function and the state sequence $x_{t,L}=(x_{t-T+1},\dots,x_t)$  follows the dynamics \eqref{eq:gen_dyn_sys} with input $u_{t,L}$ and initial condition $x_{t-L+1}$. We also define the noise-free output error, $\widebar{C}$ as follows:
\begin{align*}
\widebar{C}(p,x_{t-L+1},u_{t,L})&= C(p,x_{t-L+1},u_{t,L},0),\\
                                             &=\sum_{k=t-L+1}^t\Vert \bar{y}_k-h(x_k,p) \Vert^2+\theta(p),
\end{align*}
where $\bar{y}_k=h(x_k,\bar{p})$.

Thus, the MHPE problem can be defined as follows:
\begin{equation}
\begin{array}{rrclcc}
 C^*(x_{t-L+1},u_{t,L},v_{t,L})=\displaystyle \underset{p\in \mathbb{R}^{n_p}}{\text{min}} & \multicolumn{3}{l}{  C(p,x_{t-L+1},u_{t,L},v_{t,L})} \\
\textrm{s.t.} & x_{k+1} = f(x_{k},u_{k},p)
\end{array}
\label{eq:noisy_MHE}
\end{equation}
as well as the noise-free MHPE:
\begin{equation}
\begin{array}{rrclcc}
\widebar{C}^*(x_{t-L+1},u_{t,L})=\displaystyle \underset{p\in \mathbb{R}^{n_p}}{\text{min}} & \multicolumn{3}{l}{   \widebar{C}(p,x_{t-L+1},u_{t,L})} \\

\textrm{s.t.} & x_{k+1} = f(x_{k},u_{k},p)
\end{array}
\label{eq:noise_free_MHE}
\end{equation}
\begin{assumption}\label{as:penalisation}
Function $\theta$ is a non-negative smooth convex penalty function such that $\theta(p)=0$ for any $ p\in \mathcal{P}$.  
\end{assumption}

\begin{lemma}

Under Assumption \ref{as:regularity+compact_control_set},  for any $v_{t,L} \in (\mathbb{R}^{n_y})^L$, $u_{t,L} \in (\mathbb{R}^{n_u})^L$, and $x_{t-T+1} \in \mathbb{R}^{n_x}$, 
\begin{align*}
\widebar{C}^*(x_{t-L+1},u_{t,L})&=\widebar{C}(\bar{p},x_{t-L+1},u_{t,L})=0,\\
\nabla_p\widebar{C}(\bar{p},x_{t-L+1},u_{t,L})&=0,                                       
\end{align*}
and $C$ and $\widebar{C}$ are $\mathcal{C}^3$ functions. In particular, $\bar{p}$ is a global minimiser of Problem \eqref{eq:noise_free_MHE}. 
\end{lemma}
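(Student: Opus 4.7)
My plan is to split the lemma into its three assertions and verify each by direct computation from the definitions, using only Assumptions \ref{as:regularity+compact_control_set} and \ref{as:penalisation}.

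First, to show $\widebar{C}(\bar{p},x_{t-L+1},u_{t,L}) = 0$, I would observe that when the decision variable $p$ in the dynamics constraint $x_{k+1}=f(x_k,u_k,p)$ is set to $\bar{p}$, the trajectory generated from the initial condition $x_{t-L+1}$ is exactly the same trajectory used to define $\bar{y}_k = h(x_k,\bar{p})$. Hence each residual $\Vert \bar{y}_k - h(x_k,\bar{p})\Vert^2$ is identically zero. Combining this with $\theta(\bar{p}) = 0$, which follows from Assumption~\ref{as:penalisation} since $\bar{p}\in\mathcal{P}$, gives $\widebar{C}(\bar{p},\cdot) = 0$. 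Because $\widebar{C}$ is a sum of nonnegative terms, we also have $\widebar{C} \geq 0$ pointwise, so $\bar{p}$ is a global minimiser of Problem~\eqref{eq:noise_free_MHE} and $\widebar{C}^*(x_{t-L+1},u_{t,L}) = 0$.

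Second, for the vanishing gradient: the minimisation in \eqref{eq:noise_free_MHE} is carried out over the open set $\mathbb{R}^{n_p}$, so $\bar{p}$ is an interior global minimiser. Once the $\mathcal{C}^3$ regularity (third claim) is established, the classical first-order necessary optimality condition yields $\nabla_p \widebar{C}(\bar{p},x_{t-L+1},u_{t,L}) = 0$.

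Third, to prove the $\mathcal{C}^3$ regularity of $C$ and $\widebar{C}$, I would view the state sequence $x_k$ as a function of $(p, x_{t-L+1})$ obtained by iterating the map $f(\cdot,u_{k-1},\cdot)$, which is $\mathcal{C}^3$ by Assumption~\ref{as:regularity+compact_control_set}. A short induction with the chain rule shows each $x_k$ is $\mathcal{C}^3$ in $(p,x_{t-L+1})$. Composing with the $\mathcal{C}^3$ observation map $h$, squaring the norm, summing over the horizon, and adding the smooth penalty $\theta$ (Assumption~\ref{as:penalisation}) preserves $\mathcal{C}^3$ regularity, so $C$ and $\widebar{C}$ are $\mathcal{C}^3$.

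There is essentially no genuine obstacle here, since the lemma is a compilation of elementary facts. The only point deserving care is bookkeeping the $p$-dependence of the state sequence appearing inside $h(x_k,p)$, both when plugging $p=\bar{p}$ (to argue the trajectory coincides with the true one) and when differentiating with respect to $p$; once this is made explicit the three claims follow immediately.
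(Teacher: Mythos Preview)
Your proposal is correct and follows essentially the same approach as the paper, which simply notes that the first claim follows from the definition and nonnegativity of $\widebar{C}$ and the remaining claims from the regularity assumptions. Your version is a more detailed unpacking of the same argument; the only extra care you take is making explicit that $\theta(\bar{p})=0$ relies on Assumption~\ref{as:penalisation}, which the lemma statement omits but is indeed needed.
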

\begin{proof}
The first item follows from the definition and the non-negativity of $\bar{C}$ and the second one from the regularity assumptions.  
\end{proof}

\begin{proposition}\label{prop:diff_C}
Under Assumptions \ref{as:regularity+compact_control_set} and \ref{as:penalisation},  for any $v_{t,L} \in (\mathbb{R}^{n_y})^L$, $u_{t,L} \in (\mathbb{R}^{n_u})^L$, $x_{t-T+1} \in \mathbb{R}^{n_x}$, and $p\in\mathbb{R}^{n_p}$ 
\begin{align*}
    \nabla_p^2 C(p,x_{t-L+1},u_{t,L},v_{t,L})&= \mathcal{O}(p,x_{t-L+1},u_{t,L})+\nabla^2\theta(p)\\&+\mathcal{R}(p,x_{t-L+1},u_{t,L},v_{t,L}),\\
    \nabla_p^2 \bar{C}(\bar{p},x_{t-L+1},u_{t,L})&=\mathcal{O}(\bar{p},x_{t-L+1},u_{t,L}) +\nabla^2\theta(\bar{p}),
\end{align*}
where $\mathcal{O}=NN^T$ is called the \emph{Observability Grammian} with $N$ depending only on first order terms with respect to p and where $\mathcal{R}(p,x_{t-L+1},u_{t,L},v_{t,L})$ is a second order term with respect to p. 
\end{proposition}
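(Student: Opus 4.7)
The plan is to treat $C$ as a nonlinear least squares objective plus the penalty $\theta$, and to apply the standard Gauss--Newton decomposition of the Hessian. First I would introduce the parameter-to-output map $h_k : p \mapsto h(x_k(p), p)$, where $x_k(p)$ denotes the state at time $k$ obtained by iterating the dynamics $x_{j+1} = f(x_j, u_j, p)$ from the fixed initial condition $x_{t-L+1}$ with the control sequence $u_{t,L}$. Writing $r_k(p) = y_k - h_k(p)$ gives $C(p,\cdot) = \sum_{k=t-L+1}^t \|r_k(p)\|^2 + \theta(p)$. Under Assumption \ref{as:regularity+compact_control_set}, $f$ and $h$ are $\mathcal{C}^3$, so iterating the chain rule shows $x_k(p)$, and hence $r_k(p)$, is $\mathcal{C}^3$ in $p$; combined with Assumption \ref{as:penalisation} this justifies differentiating $C$ twice and gives the claimed regularity of $C$ and $\bar{C}$.

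Next I would compute $\nabla_p h_k(p)$ using a sensitivity recursion: $\nabla_p h_k = \partial_x h(x_k, p)\, \nabla_p x_k + \partial_p h(x_k, p)$, where $\nabla_p x_k$ is generated by differentiating the dynamics along the trajectory. These Jacobians are, by construction, objects built from first-order partial derivatives of $f$ and $h$ evaluated along $x_k(p)$. Stacking $\nabla_p h_{t-L+1}(p),\dots,\nabla_p h_t(p)$ column-wise (with an appropriate factor of $\sqrt{2}$) defines the matrix $N = N(p, x_{t-L+1}, u_{t,L})$ of the statement, and hence $\mathcal{O}(p, x_{t-L+1}, u_{t,L}) := N N^T$ depends only on first-order terms in $p$.

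Then I would differentiate $\|r_k(p)\|^2$ twice by direct computation, obtaining
\begin{equation*}
\nabla_p^2 \|r_k(p)\|^2 = 2\, \nabla_p h_k(p)^T \nabla_p h_k(p) - 2 \sum_{j=1}^{n_y} r_{k,j}(p)\, \nabla_p^2 h_{k,j}(p).
\end{equation*}
Summing over $k$ and adding $\nabla^2 \theta(p)$, the first term assembles into $\mathcal{O}(p, x_{t-L+1}, u_{t,L})$, while the remainder
\begin{equation*}
\mathcal{R}(p, x_{t-L+1}, u_{t,L}, v_{t,L}) := -2 \sum_{k=t-L+1}^t \sum_{j=1}^{n_y} r_{k,j}(p)\, \nabla_p^2 h_{k,j}(p)
\end{equation*}
collects precisely the second-order-in-$p$ derivatives of the $h_k$ weighted by the residuals. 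This yields the first identity. For the second identity, at $p = \bar{p}$ and in the noise-free setting one has $\bar{y}_k - h(x_k(\bar{p}), \bar{p}) = 0$ for every $k$, so $\mathcal{R}(\bar{p}, x_{t-L+1}, u_{t,L}, 0) = 0$ and only $\mathcal{O}(\bar{p}, x_{t-L+1}, u_{t,L}) + \nabla^2 \theta(\bar{p})$ remains.

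The only step requiring care is the iterated chain rule for $\nabla_p x_k$ (and $\nabla_p^2 x_k$ when one wants $\mathcal{R}$ explicitly), which produces expressions depending recursively on $\partial_x f$ and $\partial_p f$; however, this is purely mechanical bookkeeping, and no structural difficulty is expected. The algebraic identification of the Gauss--Newton part with the observability Grammian, and of the residual-weighted Hessian term with $\mathcal{R}$, is immediate from the least-squares structure.
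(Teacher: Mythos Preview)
Your argument is correct: it is exactly the standard Gauss--Newton decomposition of the Hessian of a nonlinear least-squares cost, and the identification of the first-order part with $NN^T$ and of the residual-weighted second-order part with $\mathcal{R}$ is the right one. The paper itself does not give a proof but simply defers to classical online MHE references (Diehl et al.), which carry out precisely the computation you sketch; so your proposal is in fact a fleshed-out version of what the paper cites rather than a different route.
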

\begin{proof}
This can be derived from classical online Moving Horizon Estimation results. For example see \cite{diehl_efficient_2009}
\end{proof}

We now show the existence and uniqueness of solution of Problem \eqref{eq:noisy_MHE}. We first need to introduce an assumption on the Observability Grammian at $\bar{p}$.

\begin{definition}[L-step observability after feedback]\label{as:observability_feedback} 
For  $M>0$, $\bar{\delta}>0$ and $t\geq L-1$, system \eqref{eq:gen_dyn_sys} is said to be \emph{L-step observable after feedback} on $\widebar{B}(0,M)$ at time $t$ with level $\bar{\delta}$,  iff  $x_{t-L+1}\in \widebar{B}(0,M)$ implies that there exists a control sequence  $u_{t,L} \in {\mathcal{U}}^L$ that satisfies:
\begin{align}
   \mathcal{O}(\bar{p},x_{t-L+1},u_{t,L})\succeq \bar{\delta} I. \label{eq:obs_gram}
\end{align}

\end{definition}
\begin{proposition}[Local existence and uniqueness]
\label{prop:existence_uniqueness}
  For  $M>0$ and $\bar{\delta} >0$, assume that system \eqref{eq:gen_dyn_sys} is $L$-step observable after feedback on $\widebar{B}(0,M)$ at time $t$ with level $\bar{\delta}$. Under Assumptions \ref{as:regularity+compact_control_set}, \ref{as:bounded_noise} \ref{as:penalisation} there exist ${K}(M,\nu)\geq0$ that is non decreasing with respect to  $\nu$ such that if:
  \begin{align}
      K(M,\nu)\nu< \bar{\delta} \label{eq:neighhood_eps}
  \end{align}
 then for any $x_{t-L+1}\in \widebar{B}(0,M)$ and $u_{t,L} \in {\mathcal{U}}^L$ satisfying Equation \eqref{eq:obs_gram} and for any $\Vert v_{t,L}\Vert \leq \nu$, $C(\cdot,x_{t-L+1},u_{t,L},v_{t,L})$ admits a unique minimiser  on $\widebar{B}(\bar{p},\nu)$ denoted by  $p^*_{t,\nu}$.  Moreover, $p^*_{t,\nu}$ is a strict local minimiser of Problem \eqref{eq:noisy_MHE}.  

\end{proposition}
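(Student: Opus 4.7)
The plan is to exploit the second-order information established in Proposition \ref{prop:diff_C}: near $\bar p$ the cost $C$ behaves like a quadratic whose Hessian is approximately $\mathcal{O}(\bar p,\cdot) + \nabla^2 \theta(\bar p) \succeq \bar\delta I$, while the gradient at $\bar p$ is of order $\nu$ because $y_k$ differs from the noise-free output $\bar y_k$ only by $v_k$. This localises the minimiser in a neighbourhood of $\bar p$ whose size scales with $\nu$, and strong convexity on that neighbourhood gives uniqueness.

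The first step is a uniform lower bound on $\nabla_p^2 C$ on $\widebar{B}(\bar p,\nu)$. Combining the decomposition in Proposition \ref{prop:diff_C} with the $\mathcal{C}^3$ smoothness of $f, h, \theta$, the convexity of $\theta$ (so $\nabla^2\theta \succeq 0$), and the uniform boundedness of the trajectory $(x_k)_{k=t-L+1}^t$ generated from $x_{t-L+1}\in \widebar{B}(0,M)$ and $u_{t,L}\in \mathcal{U}^L$ on the compact set $\widebar{B}(0,M)\times \mathcal{U}$, I would produce constants $L_{\mathcal{O}}(M), L_\theta(M), c_{\mathcal{R}}(M)$ such that $\|\mathcal{O}(p,\cdot)-\mathcal{O}(\bar p,\cdot)\| \le L_{\mathcal{O}}\|p-\bar p\|$, $\|\nabla^2\theta(p)-\nabla^2\theta(\bar p)\| \le L_\theta \|p-\bar p\|$, and, using $\|y_k - h(x_k,p)\| \le \nu + L_h\|p-\bar p\|$, $\|\mathcal{R}(p,\cdot,v_{t,L})\| \le c_{\mathcal{R}}(\nu + \|p-\bar p\|)$. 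Combined with $\mathcal{O}(\bar p,\cdot)\succeq\bar\delta I$, these yield $\nabla_p^2 C(p,\cdot,v_{t,L}) \succeq (\bar\delta - K_1(M)\nu)\,I$ on $\widebar{B}(\bar p,\nu)$.

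Next, I would bound the gradient at $\bar p$. A direct computation gives $\nabla_p C(\bar p,\cdot,v_{t,L}) = -2\sum_{k=t-L+1}^t v_k^\top \nabla_p h(x_k,\bar p) + \nabla\theta(\bar p)$, and since $\bar p\in\mathcal{P}$ is a global minimiser of the non-negative convex $\theta$, $\nabla\theta(\bar p)=0$, so $\|\nabla_p C(\bar p,\cdot,v_{t,L})\| \le K_g(M)\nu$. Choose $K(M,\nu)$ so that $K(M,\nu)\nu < \bar\delta$ implies both $\mu := \bar\delta - K_1(M)\nu > 0$ and $2K_g(M) < \mu$. Then $C(\cdot,x_{t-L+1},u_{t,L},v_{t,L})$ is $\mu$-strongly convex on the compact convex ball $\widebar{B}(\bar p,\nu)$, which immediately yields existence and uniqueness of a minimiser $p^*_{t,\nu}$ on the ball. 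Combining strong convexity, the constrained first-order optimality $\nabla_p C(p^*_{t,\nu})^\top (\bar p - p^*_{t,\nu}) \ge 0$, and the convex inequality $C(p^*_{t,\nu}) \ge C(\bar p) + \nabla_p C(\bar p)^\top (p^*_{t,\nu}-\bar p)$, I would obtain $\|p^*_{t,\nu}-\bar p\| \le 2\|\nabla_p C(\bar p,\cdot)\|/\mu < \nu$, putting $p^*_{t,\nu}$ strictly inside $\widebar{B}(\bar p,\nu)$. Interiority plus $\nabla_p^2 C(p^*_{t,\nu},\cdot)\succ 0$ from Step~1 then identifies $p^*_{t,\nu}$ as a strict local minimiser of Problem \eqref{eq:noisy_MHE}.

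The main obstacle is packaging the various constants from Steps 1 and 2 into a single function $K(M,\nu)$ that is monotone in $\nu$ and such that the single condition $K(M,\nu)\nu < \bar\delta$ simultaneously forces positive-definiteness ($\bar\delta > K_1(M)\nu$) and interiority ($2K_g(M) < \bar\delta - K_1(M)\nu$); since $K_g(M)$ is a first-order constant while $K_1(M)\nu$ is a genuine $O(\nu)$ perturbation, the two scales need to be balanced carefully to satisfy monotonicity in $\nu$.
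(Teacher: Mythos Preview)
Your approach is essentially the same as the paper's: the authors' proof is a one-line sketch that invokes the observability bound \eqref{eq:obs_gram} together with the local Lipschitz continuity of $\mathcal{O}$, $\mathcal{R}$ and $\theta$ around $(\bar p,x_{t-L+1},0)$, and you have simply unpacked that sketch into explicit Hessian and gradient estimates followed by a strong-convexity argument. The packaging difficulty you flag at the end---that interiority seems to require $\bar\delta>2K_g(M)+K_1(M)\nu$, whose constant part does not naturally fit the form $K(M,\nu)\nu<\bar\delta$ with $K$ non-decreasing in $\nu$---is a legitimate technical observation that the paper's sketch also elides, not a defect in your strategy.
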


\begin{proof}
The result follows from \eqref{eq:obs_gram} and the local Lipschitz property of $\mathcal{O}$, $\mathcal{R}$ and $\theta$ around $(x_{t-L+1},\bar{p}, v_{t,L})$

\end{proof}

The existence and uniqueness result is standard,  see \cite{fiacco_introduction_1983}. The major point of Proposition \ref{prop:existence_uniqueness} is that the existence and uniqueness result that holds under \eqref{eq:neighhood_eps} is time invariant. Indeed, Proposition \ref{prop:existence_uniqueness} means that if the noise on the measurements is sufficient small, Problem \eqref{eq:noisy_MHE} has a locally unique solution  that is close to $\bar{p}$ uniformly with respect to  $x_{t-L+1}$, $u_{t,L}$ and $v_{t,L}$. In the following, $p^*_{t,\nu}$ will denoted by $p^*_t$ when it exists.


\subsection{Fast estimation algorithm}
Classical MHE algorithms aim to compute $p^*_t$ precisely which may require to perform numerous iterations of some optimisation routine, $\psi_t:\mathbb{R}^{n_p}\rightarrow \mathbb{R}^{n_p}$. As a consequence, cheap techniques where one
computes an estimate of $p^*_{t}$ denoted by $\hat{p}_t$  by performing a few iterations of the optimisation routine have been introduced. See \cite{diehl_efficient_2009} for a review on fast MHE. Therefore, we make the following assumption:

\begin{assumption}[Optimisation algorithm]\label{as:opti_algo}\hfill

 There exist $\bar{r}>0$, $\gamma>0$ such that for any $t\geq L-1$, if $p^*_{t}$ exists then for any $p \in \widebar{B}(\bar{p},r)$:
  \begin{align}
      \Vert\psi_t(p)-p^*_{t} \Vert\leq \gamma\Vert p-p_t^* \Vert \label{eq:opti_algo_convergence}
  \end{align}
\end{assumption}
Under Assumption \ref{as:opti_algo}, for any $t\geq L-1$ and some $\hat{p}_{L-1}\in \widebar{B}(\bar{p},\bar{r})$, we define the online estimate of $\bar{p}$ as follows:
  \begin{align}
      \hat{p}_{t+1}=\psi_t(\hat{p}_t).\label{eq:online_estimate}
  \end{align}


Note that $\hat{p}_t$ can be defined without requiring $p^*_{t}$ to exist.
 The goal of the following is to design a controller that satisfies Equation \eqref{eq:obs_gram} along the trajectories of \eqref{eq:gen_dyn_sys}  while also stabilising it  at $0$.

\section{OBSERVABILITY SEEKING MODEL PREDICTIVE CONTROL }\label{section:obs_control}

The controller presented in this section is composed of two parts: a controller that is  input-to-state stabilising with respect to perturbation on the control and a non-destabilising observability-seeking controller. We first recall the notion of Regional ISS stability for nonlinear discrete-time systems and then define our ISS stabilising controller.
\subsection{Regional ISS-stability for nonlinear discrete time systems}
The definitions from this section are taken from \cite{limon_input--state_2009}.
For  $(n,m)\in (\mathbb{N}^*)^2$, consider a system of the following form for $t\geq0$:
\begin{align}
    \chi_{t+1}=g_t(\chi_t,w_t), \label{eq:gen_ISS_sys}
\end{align}
where $\chi_t\in \mathbb{R}^{n}$ is the state and $w_t \in \mathcal{W}\subset \mathbb{R}^{m} $ is a disturbance such that  $0\in \mathcal{W}$ and $g_t:\mathbb{R}^n\times\mathbb{R}^m\rightarrow\mathbb{R}^n$ satisfies $g(0,0)=0$.

\begin{definition}[Robustly  positively invariant (RPI) set]
Let $\Omega\subset \mathbb{R}^n$. The set $\Omega$ is called a \emph{(uniformly) Robustly Positively Invariant} set for system \eqref{eq:gen_ISS_sys} iff for any $t\geq0$, $\chi \in \Omega$ and $w \in \mathcal{W}$, $g_t(\chi,w)\in\Omega $.
\end{definition}

\begin{definition}[Regional Input-to-State Stability]
Consider a RPI set $\Omega$ for System \eqref{eq:gen_ISS_sys} such that $0\in \textrm{int}(\Omega)$. System \eqref{eq:gen_ISS_sys} is said to be \emph{ Input-to-State Stable (ISS) in $\Omega$} if there exist a $\mathcal{K}\mathcal{L}$-function $\beta:\mathbb{R}^+\times\mathbb{R}^+\rightarrow\mathbb{R}^+$ and a $\mathcal{K}$-function $\gamma :\mathbb{R}^+\rightarrow\mathbb{R}^+$ such that for any bounded sequence $w\in  \mathcal{W}^{\mathbb{N}}$ and $\chi_0\in \Omega$ and any $t\geq0$:
\begin{align}
    \Vert \chi_{t}\Vert \leq \beta(\Vert \chi_0\Vert,t)+\gamma(\Vert w \Vert_{\infty})\label{eq:ISS_def_ineq}
\end{align}

If  \eqref{eq:ISS_def_ineq} holds for $\Omega=\mathbb{R}^n$, then System \eqref{eq:gen_ISS_sys} is said to be \emph{globally ISS}. 
\end{definition}

\begin{definition}[Regional ISS-Lyapunov function]\label{def:ISS_lya}
Consider a RPI set $\Omega$ for System \eqref{eq:gen_ISS_sys} such that $0\in \textrm{int}(\Omega)$. A function $V: \mathbb{R}^{n}\rightarrow  \mathbb{R}^+$ is called a \emph{ISS-Lyapunov function in $\Omega$ } if there exist two $\mathcal{K}_{\infty}$ functions $\underline{\alpha}$ and $\bar{\alpha}$ such that for any $\chi\in \Omega $:
    \begin{align*}
       \underline{\alpha}(\Vert\chi\Vert)\leq V(\chi)\leq\bar{\alpha}(\Vert\chi\Vert),
    \end{align*}
    and there exist a $\mathcal{K}_{\infty}$-function $\alpha$ and a $\mathcal{K}$-function $\sigma$ such that for any $t\geq0$, $\chi_0\in \Omega$, one has along the trajectories of system \eqref{eq:gen_ISS_sys}:
    \begin{align*}
        V(\chi_{t+1})-V(\chi_t)\leq-\alpha(\Vert\chi_t\Vert)+\sigma(\Vert w_t\Vert).
    \end{align*}

If these items hold for $\Omega=\mathbb{R}^n$, $V$ is called a \emph{global ISS-Lyapunov function}. 
\end{definition}

\begin{proposition}\label{prop:ISS}
For any RPI set $\Omega$, if system \eqref{eq:gen_ISS_sys} admits a ISS-Lyapunov function in $\Omega$ then it is ISS in $\Omega$.
\end{proposition}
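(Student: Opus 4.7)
The plan is to adapt the standard Lyapunov-based ISS argument to the regional/discrete-time setting, following the approach of \cite{limon_input--state_2009}. The idea is to use the Lyapunov decrease inequality from Definition \ref{def:ISS_lya} to show that $V(\chi_t)$ must strictly decrease whenever $\chi_t$ lies outside a sublevel set determined by $\|w\|_\infty$, and to show that once $V(\chi_t)$ enters that sublevel set it stays bounded. Combined with the sandwich bounds $\underline{\alpha}(\|\chi\|)\leq V(\chi)\leq \bar{\alpha}(\|\chi\|)$, this will yield the required $\mathcal{K}\mathcal{L}$-plus-$\mathcal{K}$ bound.

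First I would fix a bounded disturbance sequence $w\in\mathcal{W}^{\mathbb{N}}$ and $\chi_0\in\Omega$; since $\Omega$ is RPI, $\chi_t\in\Omega$ for all $t\geq0$, so Definition \ref{def:ISS_lya} applies pointwise along the trajectory. Then I would introduce the threshold function $\rho(s)=\bar{\alpha}\circ\alpha^{-1}(2\sigma(s))$ (or a similar composition tuned so that $\alpha$ dominates $\sigma$) and the sublevel set $\Omega_w=\{\chi\in\Omega:V(\chi)\leq \rho(\|w\|_\infty)\}$. The key observation is that if $V(\chi_t)>\rho(\|w\|_\infty)$ then $\underline{\alpha}(\|\chi_t\|)$ and hence $\alpha(\|\chi_t\|)$ is large enough relative to $\sigma(\|w_t\|)$ to make the right-hand side of the decrease inequality strictly negative; more precisely, one obtains $V(\chi_{t+1})-V(\chi_t)\leq -\tfrac{1}{2}\alpha(\|\chi_t\|)$.

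Next I would exploit this decrease to build the $\mathcal{K}\mathcal{L}$ term. Writing the decrease as $V(\chi_{t+1})\leq V(\chi_t)-\tfrac{1}{2}\alpha\circ\bar{\alpha}^{-1}(V(\chi_t))$ outside $\Omega_w$, a standard comparison-lemma argument (for instance the discrete-time version in \cite{limon_input--state_2009}) produces a $\mathcal{K}\mathcal{L}$-function $\tilde{\beta}$ such that $V(\chi_t)\leq \max\{\tilde{\beta}(V(\chi_0),t),\rho(\|w\|_\infty)\}$. In particular, $V(\chi_t)$ cannot exceed the sublevel set threshold for more than finitely many steps, and once it enters $\Omega_w$ it remains trapped there (because at the boundary the decrease property forbids leaving). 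Converting back via $\|\chi_t\|\leq\underline{\alpha}^{-1}(V(\chi_t))$ and using the subadditivity trick $\underline{\alpha}^{-1}(a+b)\leq\underline{\alpha}^{-1}(2a)+\underline{\alpha}^{-1}(2b)$ to split the max into a sum yields the ISS estimate \eqref{eq:ISS_def_ineq} with $\beta(r,t)=\underline{\alpha}^{-1}(2\tilde{\beta}(\bar{\alpha}(r),t))$ and $\gamma(s)=\underline{\alpha}^{-1}(2\rho(s))$.

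The only delicate point I expect is the construction of the $\mathcal{K}\mathcal{L}$-function $\tilde\beta$ from the implicit decrease $V(\chi_{t+1})\leq V(\chi_t)-\tfrac{1}{2}\alpha\circ\bar{\alpha}^{-1}(V(\chi_t))$, because $\alpha\circ\bar{\alpha}^{-1}$ is only a $\mathcal{K}$-function (not necessarily contractive or linear). This is handled by the discrete-time Sontag-type comparison lemma, which guarantees that any scalar recursion $s_{t+1}\leq s_t-\eta(s_t)$ with $\eta\in\mathcal{K}_\infty$ and $\eta(s)<s$ admits a $\mathcal{K}\mathcal{L}$ bound $s_t\leq\tilde\beta(s_0,t)$; invoking this lemma (as done in \cite{limon_input--state_2009}) finishes the proof, so I would simply cite it rather than reprove it.
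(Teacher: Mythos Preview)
Your proposal is correct and follows precisely the standard argument of \cite{limon_input--state_2009}; the paper itself does not reprove the result but merely cites that reference, so your sketch is a faithful expansion of the paper's one-line proof. There is no gap or methodological divergence to report.
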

\begin{proof}
See \cite{limon_input--state_2009}
\end{proof}

In the sequel, we assume that there exists a feedback controller that makes system \eqref{eq:gen_dyn_sys} globally ISS with respect to  disturbances in the input with the nominal parameter $\bar{p}$.
\begin{assumption}\label{as:ISS_nominal_state_feedback}
There exist a globally Lipschitz nominal state-feedback $\kappa:\mathbb{R}^{n_x}\times\mathbb{R}^{n_p}\rightarrow \mathcal{U}$ with Lipschitz constant $L_{\kappa}$ such that $\kappa(0,\bar{p})=0$  and a function $V: \mathbb{R}^{n_{x}}\rightarrow  \mathbb{R}^+$ such that the system defined for any $t\geq0$, $x_0\in \mathbb{R}^{n_x}$  and $d_t\in \mathbb{R}^{n_u}$ by:
\begin{align}
    x_{t+1}=f(x_t,\kappa(x_t,\bar{p})+d_t,\bar{p}),\label{eq:feedback_sys}
\end{align}
admits $V$ as a global ISS-Lyapunov function. 
\end{assumption}
In the rest of the paper, we assume that Assumption \ref{as:ISS_nominal_state_feedback} holds for system \eqref{eq:gen_dyn_sys}. We fix $\kappa$ and $V$ as in Assumption \ref{as:ISS_nominal_state_feedback} and the associated $\underline{\alpha}$, $\bar{\alpha}$, $\alpha$ and $\sigma$ from Definition \ref{def:ISS_lya}.

\subsection{Model Predictive Control with maximum level of observability}

 The purpose of this section is to design an MPC controller that corrects the feedback controller $\kappa$ and ensures that Assumption \ref{as:observability_feedback} is satisfied one step forward in the future while also keeping the system stable. We assume that the control horizon is one even if it means to consider augmented controls and observations. Thus, the predicted Observability Grammian $\mathcal{O}$ will be considered on $[t-L+2, t+1]$. Notice that $\mathcal{O}$ as defined in Proposition \ref{prop:diff_C}, can also be seen as a function of some state sequence $x_{t,L}$, some input sequence $u_{t,L}$ and some parameter $p$ even if  $x_{t,L}$ does not satisfy  \eqref{eq:gen_dyn_sys}. Specifically, in this section, we consider $\mathcal{O}$ at the measured states $x^0_{t,L}$. With a slight abuse of notation, $\mathcal{O}$ can be decomposed by definition as follows, for any $p\in \mathbb{R}^{n_p}$:
 \begin{align}
     \mathcal{O}(p,x^0_{t+1,L},u_{t+1,L})&=\Gamma(p,x^0_{t,L-1},u_{t,L-1})\label{eq:obs_gram_decomposition}\\
     &+S(p,x^0_{t+1,L},u_{t+1,L})\notag,
 \end{align}
where $\Gamma(p,x^0_{t,L-1})\succeq 0$ depends only on the parameter and the present and  past measured state trajectory and 
 $S(p,x^0_{t+1,L},u_{t+1,L})\succeq 0$ depends on the whole trajectory of measured states and the parameter. By omitting the dependency on the past trajectory and injecting \eqref{eq:gen_dyn_sys}, we define $S_f(p,x_{t}^0,u_t)=S(p,f(x_{t}^0,u_t,p),u_t)$.
The MPC problem can then be formulated as follows for any $t\geq L-1$ and some $0<\mu<1$, ${\delta}'>0$:

\begin{mini!}|s|[2]                
    {\delta,u_{t}^{\text{obs}}}                               
    {-\delta+c(u_t)\label{eq:cost}}   
    {\label{eq:MPC_obs}}             
    {}                                
    \addConstraint{&\kappa({x}_t^0,\hat{p}_t)+u_{t}^{\text{obs}}}{&\in \mathcal{U} \label{eq:con1}}    
    \addConstraint{&  \Vert u_{t}^{\text{obs}}\Vert\leq \sigma^{-1}&\left(\frac{\mu}{2}\alpha\left(\frac{1}{2}\Vert {x}_t^0 \Vert\right)\right)\label{eq:con2}}  
    \addConstraint{& \widehat{\Gamma}_t+ S_f(\hat{p}_t,x_{t}^0,\kappa({x}_t^0,\hat{p}_t)}{&+u_{t}^{\text{obs}})\succeq \delta I \label{eq:con3}}    %
    \addConstraint{&\delta&\geq {\delta}'\label{eq:con4}}
\end{mini!}
   where $\widehat{\Gamma}_t=\Gamma(\hat{p}_t,x^0_{t,L-1},u_{t,L-1})$ is given  and $u_{t,L-1}$ denotes the sequence of previous controls. In the following, we fix a initial control sequence $\hat{u}_{L-1,L}$. Note that Constraint \eqref{eq:con1} ensures control admissibility, Constraint \eqref{eq:con2} ensures that $u_{t}^{\text{obs}}$ does not destabilise system \eqref{eq:gen_aug_sys}, Constraint \eqref{eq:con3} ensures that  $\delta$ is lower bound on the smallest eigenvalue on the Observability Grammian, Constraint \eqref{eq:con4}  ensures that $\delta$ is bounded from below by $\bar{\delta}$ uniformly in $t$.
 The cost \eqref{eq:cost} is a combination on the lower bound on the Observability Grammian and a cost on $u_{t}^{\text{obs}}$ denoted by $c$. In the sequel, we make a feasibility assumption on Problem \eqref{eq:MPC_obs}.

\begin{assumption}\label{as:MPC_admissiblity}
For any $M>0$, there exists $ {\delta}'(M)>0$ and $0<\mu(M)<1$  such that  for any $t\geq L-1$ if $\Vert x_{t-L+2} \Vert \leq M $ then Problem \eqref{eq:MPC_obs} is feasible. 
\end{assumption}
In the rest of the paper, we suppose that Assumption \ref{as:MPC_admissiblity} holds and we denote by $\hat{u}_{t}^{\text{obs}}(x_{t,L}^0,\hat{p}_t,M)$ a feasible point of Problem \eqref{eq:MPC_obs} for any $M>0$. Finally, for $M>0$, the total control applied to system, denoted by $\hat{u}_t$ for any $t\geq L-1$,  can be written as follows:
\begin{align}
    \hat{u}_t(x_{t,L}^0,\hat{p}_t,M)=\kappa({x}_t^0,\hat{p}_t)+\hat{u}_{t}^{\text{obs}}(x_{t,L}^0,\hat{p}_t,M).\label{eq:total_control}
\end{align}

   \begin{remark}

 Note that when it is defined the sequence of input $\hat{u}_{t,L}$ satisfies by construction:
            \begin{align}
                  \mathcal{O}^0=\Gamma(\hat{p}_t,x^0_{t,L-1},\hat{u}_{t,L-1})+S_f(\hat{p}_t,x^0_t,\hat{u}_t)\succeq \delta'I 
            \end{align}
 Problem \eqref{eq:MPC_obs} is a nonlinear Semi-Definite Program because of Constraint \eqref{eq:con3} and is generally very hard to solve. However, thanks to Constraints \eqref{eq:con3} and \eqref{eq:con4}, it will be sufficient for $\hat{u}_{t}^{\text{obs}}$ to just be feasible in order to maintain observability.    
  Assumption \ref{as:MPC_admissiblity} can be seen as a one step reachability assumption of the set defined by Constraints \eqref{eq:con3} and \eqref{eq:con4} using small inputs from $\mathcal{U}$. 
  

   \end{remark}
\section{OUTPUT-FEEDBACK STABILITY}\label{section:output-feedback_stab}
For $t\geq L-1$, let $e_t=\hat{p}_t-\bar{p}$ be the parameter estimation error. Roughly speaking,  we show in this section that under the control law \eqref{eq:total_control}, both $x_t$ and $e_t$  satisfy some ISS property for small initial estimation error and small measurement noise. We first introduce the augmented system state/error. For any $t\geq L-1$, provided that  for any $L-1\leq k\leq t$, $\Vert x_{t-L+1} \Vert \leq M$ for some $M>0$ and that $\hat{u}_k$, $p^*_{k+1}$ exist we consider an augmented state $\chi_t=(x_t,e_t)$ and an augmented dynamics $g_t:\mathbb{R}^{n_x+n_p}\times\mathbb{R}^{n_x+n_p}\rightarrow\mathbb{R}^{n_x+n_p}$ such that for any $\chi_{L-1}=(x_{L-1},e_{L-1})  \in \mathbb{R}^{n_x+n_p}$, and $t\geq L-1$,  the state/error system can be written as follows:
\begin{align}
\chi_{t+1}=g_t(\chi_t,w_t),\label{eq:gen_aug_sys}
\end{align}
where $g_t(\chi_t,w_t)=\begin{bmatrix}f(x_t,\hat{u}_t(M,x_t^0),\bar{p})\\ \psi_t(\bar{p}+e_t)\end{bmatrix}$ and $w_t=(p^*_{t+1}-\bar{p},v_t^0)$. Note that the expression of $g_t(\chi_t,w_t)$ does not depend explicitly of $w_t$. However, $w_t$ is a disturbance term that will be used to represent the effect of the measurement noise in the right-hand side of \eqref{eq:gen_aug_sys} in the proof.  We now define the candidate ISS Lyapunov function $W$ as follows, for any $\chi=(x,e) \in \mathbb{R}^{n_x+n_p}$ and some $\lambda>0$:
\begin{align}
W(\chi)=V(x)+\lambda \sigma(\Vert e \Vert)\label{eq:def_lya_aug}
\end{align}
We first introduce a technical assumption on $\sigma$.
 \begin{assumption}\label{as:homogeneous}
Function $\sigma$ is $\mathcal{K}_{\infty}$ and $s$-homogeneous for some $s>0$ meaning that for any $\lambda_1>0$ and $r\geq0$, $\sigma(\lambda_1 r)=\lambda_1^s\sigma(r)$.
\end{assumption}
 Note that under Assumption \ref{as:homogeneous}, for any $\chi\in \mathbb{R}^{n_x+n_p}$:
 \begin{align}
     \underline{\alpha}_1(\Vert \chi \Vert)\leq W(\chi) &\leq  \bar{\alpha}_1(\Vert \chi \Vert),\label{eq:lya_bound_aug}
 \end{align}
  where for any $r\geq 0$, $\underline{\alpha}_1(r)=\min(\underline{\alpha},\lambda \sigma)(\frac{1}{2}r)$ and $\bar{\alpha}_1(r)=\max(\bar{\alpha},\lambda \sigma)(r)$. Note that $\underline{\alpha}_1$ and $\bar{\alpha}_1$ are $\mathcal{K}_{\infty}$-functions as $\underline{\alpha}$, $\bar{\alpha}$ and $\sigma$ are  $\mathcal{K}_{\infty}$-functions.
\begin{assumption}\label{as:initial_observability}
 There exists $M_0$ and $\bar{\delta}_0$ such that $\Vert x_{L-1,L}\Vert \leq M_0$ and System \eqref{eq:gen_dyn_sys} is \emph{L-step observable after feedback} on $\widebar{B}(0,M_0)$ at time $L-1$ with level $\bar{\delta}_0$.
 
\end{assumption}
 For any $R>0$, we set $\Omega_R=\{\chi \in \mathbb{R}^{n_x+n_p}| W(\chi)\leq R\}$ 
We can now state the main result of the section.
\begin{theorem}\label{th:ISS_aug_sys}
   Under Assumptions \ref{as:regularity+compact_control_set}, \ref{as:bounded_noise},  \ref{as:penalisation}, \ref{as:opti_algo}, \ref{as:MPC_admissiblity} and \ref{as:initial_observability}, there exist $M_0>0$, $\bar{\delta}_0>0$ $\widebar{R}>0$, $\bar{\delta}>0$, $r_0>0$, $s>0$, ${r}_{max}>0$, $\lambda>0$, $\nu_{\max}>0$, $0<\gamma<\frac{1}{2}$, a $\mathcal{K}_\infty$-function $\alpha_1$ and a $\mathcal{K}$-function $\sigma_1$ such that if the following are satisfied: 
   \begin{align}
    \nu &\leq \nu_{\max},& r_0 &\leq r_{\max}, &K(M_0,\nu)\nu<\min(\bar{\delta}_0,\bar{\delta}),\label{eq:condition_parameters}
    \end{align}
   with  $K(M_0,\nu)$ from Proposition \ref{prop:existence_uniqueness} and if $\Vert x_{L-1,L} \Vert\leq M_0 $, and $\chi_{L-1}=(x_{L-1},e_{L-1})\in \Omega_{\widebar{R}}\cap (\mathbb{R}^{n_x}\times \widebar{B}(0,r_0))$  then for any $t\geq L-1$, $\hat{u}_t$ and  $p_t^*$ are well defined and the following hold:
   \begin{align}
       &\Vert x_{t,L} \Vert\leq M_0,&  &\Vert p_t^*-\bar{p}\Vert\leq \nu,& &\chi_t\in \Omega_{\widebar{R}}&
      &\Vert e_t \Vert\leq r_0,&\label{eq:th_bound_error}
    \end{align}
   \begin{align}
    \Vert e_{t+1}\Vert &\leq \gamma \Vert e_t\Vert + (1+\gamma)\Vert p_{t+1}^* -\bar{p}\Vert \label{eq:th_bound_dyn_error}\\
     W(\chi_{t+1})&\leq W(\chi_t)-\alpha_1(\Vert \chi_t \Vert)+\sigma_1(\Vert w_t\Vert).\label{eq:th_lya_dissipative_aug}
   \end{align}
   \begin{proof}
   For the sake of conciseness, we only provide a sketch of proof. It is made by strong induction. The initialisation is ensured by Assumption \ref{as:initial_observability}. Then, by assuming that \eqref{eq:th_bound_error}-\eqref{eq:th_lya_dissipative_aug} hold for any $L\leq k \leq t$, one can prove that $\hat{u}_{t+1}$ and  $p_{t+1}^*$ are well defined by invoking Proposition \eqref{prop:existence_uniqueness}, Assumption \ref{as:MPC_admissiblity} and Constraint \eqref{eq:con3}. One further gets \eqref{eq:th_bound_dyn_error} from \eqref{eq:opti_algo_convergence} and \eqref{eq:online_estimate} by the triangle inequality. Finally, from Assumption \ref{as:ISS_nominal_state_feedback} and Constraint \eqref{eq:con2} and several manipulation of $\mathcal{K}$-functions, one gets \eqref{eq:th_lya_dissipative_aug} which leads to \eqref{eq:th_bound_error} at time $t+1$ thanks to the strong induction hypothesis and Remark 3.7 of \cite{jiang_input--state_2001}.
  
   \end{proof}
 
\end{theorem}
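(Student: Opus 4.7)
The plan is to proceed by strong induction on $t \geq L-1$, along the lines indicated in the authors' sketch. For the base case $t = L-1$, Assumption \ref{as:initial_observability} provides $M_0$ and $\bar{\delta}_0$ together with $L$-step observability of level $\bar{\delta}_0$; Proposition \ref{prop:existence_uniqueness} then yields the existence and uniqueness of $p_{L-1}^*$ with $\|p_{L-1}^* - \bar{p}\| \leq \nu$ under the smallness condition $K(M_0,\nu)\nu < \bar{\delta}_0$ from \eqref{eq:condition_parameters}. I would fix $\bar{R}$ so that $\Omega_{\bar{R}}$ contains the balls needed for the trajectories to remain inside by \eqref{eq:lya_bound_aug}, and then choose $r_{\max}$, $\nu_{\max}$ small enough that $\chi_{L-1} \in \Omega_{\bar{R}}$ whenever $\|e_{L-1}\| \leq r_0 \leq r_{\max}$.

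For the inductive step, assume \eqref{eq:th_bound_error}--\eqref{eq:th_lya_dissipative_aug} for all $L-1 \leq k \leq t$. Existence of the MPC input $\hat{u}_t$ follows from Assumption \ref{as:MPC_admissiblity} since $\|x_{t-L+2}\| \leq M_0$ by induction, and Constraints \eqref{eq:con3}--\eqref{eq:con4} give $\mathcal{O}(\hat{p}_t, x^0_{t+1,L}, \hat{u}_{t+1,L}) \succeq \delta' I$. To apply Proposition \ref{prop:existence_uniqueness} at time $t+1$, this bound must be transferred from $\hat{p}_t$ and $x^0$ to $\bar{p}$ and the true states $x$: using local Lipschitzness of $\mathcal{O}$ with the inductive bounds $\|e_t\| \leq r_0$ and $\|v_k^0\| \leq \nu$, one obtains $\mathcal{O}(\bar{p}, x_{t+1,L}, \hat{u}_{t+1,L}) \succeq (\delta' - C_1(r_0 + \nu)) I \succeq \bar{\delta} I$ once $\bar{\delta} < \delta'$ and $r_{\max}$, $\nu_{\max}$ are small enough. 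With $K(M_0,\nu)\nu < \bar{\delta}$, Proposition \ref{prop:existence_uniqueness} then gives $p_{t+1}^*$ together with $\|p_{t+1}^* - \bar{p}\| \leq \nu$.

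The bound \eqref{eq:th_bound_dyn_error} is a direct triangle inequality: $\|e_{t+1}\| = \|\psi_t(\hat{p}_t) - \bar{p}\| \leq \|\psi_t(\hat{p}_t) - p_{t+1}^*\| + \|p_{t+1}^* - \bar{p}\|$, and Assumption \ref{as:opti_algo} applied to the first summand (valid because $\hat{p}_t \in \widebar{B}(\bar{p}, r_0) \subset \widebar{B}(\bar{p}, \bar{r})$) gives $\|\psi_t(\hat{p}_t) - p_{t+1}^*\| \leq \gamma\|\hat{p}_t - p_{t+1}^*\| \leq \gamma\|e_t\| + \gamma\|p_{t+1}^* - \bar{p}\|$. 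In particular $\|e_{t+1}\| \leq \gamma r_0 + (1+\gamma)\nu \leq r_0$ for $\nu_{\max}$ small enough, preserving the error bound. For \eqref{eq:th_lya_dissipative_aug}, I would rewrite the closed loop as $x_{t+1} = f(x_t, \kappa(x_t,\bar{p}) + d_t, \bar{p})$ with $d_t = \kappa(x_t^0,\hat{p}_t) - \kappa(x_t,\bar{p}) + u_t^{\text{obs}}$, use the Lipschitzness of $\kappa$ to bound $\|d_t\| \leq L_\kappa(\nu + \|e_t\|) + \|u_t^{\text{obs}}\|$, and invoke Assumption \ref{as:ISS_nominal_state_feedback} to get $V(x_{t+1}) - V(x_t) \leq -\alpha(\|x_t\|) + \sigma(\|d_t\|)$. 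Constraint \eqref{eq:con2} directly controls the $\sigma(\|u_t^{\text{obs}}\|)$ contribution by $\tfrac{\mu}{2}\alpha(\tfrac{1}{2}\|x_t^0\|)$, and the $s$-homogeneity of $\sigma$ (Assumption \ref{as:homogeneous}) lets me split $\sigma$ over sums up to a constant so that the $\sigma(L_\kappa\|e_t\|)$ term can be absorbed into the negative increment $\lambda(\sigma(\|e_{t+1}\|) - \sigma(\|e_t\|))$ after choosing $\lambda$ suitably, leaving the $\nu^0_t$ and $p_{t+1}^* - \bar{p}$ contributions assembled into $\sigma_1(\|w_t\|)$. The remaining bound $\chi_{t+1} \in \Omega_{\bar{R}}$ and hence $\|x_{t+1,L}\| \leq M_0$ then follow from \eqref{eq:th_lya_dissipative_aug}, \eqref{eq:lya_bound_aug} and Remark 3.7 of \cite{jiang_input--state_2001}.

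The main obstacle will be the simultaneous tuning of the numerous constants $M_0$, $\bar{R}$, $\bar{\delta}$, $r_0$, $\lambda$, $\mu$ and $\nu_{\max}$ so that every smallness condition closes consistently within a single induction; in particular the transfer of the observability bound from the estimated $\hat{p}_t$ used by the MPC to the true $\bar{p}$ required by Proposition \ref{prop:existence_uniqueness}, and the delicate balance in the Lyapunov step where $\lambda$ must be large enough to dominate the cross-term $\sigma(L_\kappa\|e_t\|)$ coming from $d_t$ but small enough that the $\lambda\sigma(\|e\|)$ part of $W$ does not overwhelm $V$, are the two places where all the constants must be fixed in a coherent order.
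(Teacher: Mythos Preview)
Your proposal is correct and follows essentially the same route as the paper's sketch: strong induction, with the base case supplied by Assumption~\ref{as:initial_observability}, well-definedness of $\hat{u}_{t+1}$ and $p_{t+1}^*$ via Assumption~\ref{as:MPC_admissiblity}, Constraint~\eqref{eq:con3} and Proposition~\ref{prop:existence_uniqueness}, the error recursion~\eqref{eq:th_bound_dyn_error} from the triangle inequality applied to~\eqref{eq:opti_algo_convergence}--\eqref{eq:online_estimate}, and the Lyapunov decrease~\eqref{eq:th_lya_dissipative_aug} from Assumption~\ref{as:ISS_nominal_state_feedback} together with Constraint~\eqref{eq:con2}. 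The extra details you supply---the Lipschitz transfer of the Grammian bound from $(\hat{p}_t,x^0)$ to $(\bar{p},x)$ and the use of $s$-homogeneity to split $\sigma$---are natural elaborations of steps the paper leaves implicit rather than a different argument.
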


One can further notice  from the non-decreasing property of $K$ that the set of conditions \eqref{eq:condition_parameters} is feasible uniformly with respect to  $t$ for $\nu$ and $r_0$ sufficiently small.

\begin{corollary}\label{cor:ISS_aug_sys}
Under the assumptions and the settings of Theorem \ref{th:ISS_aug_sys}, there exist $\widebar{R}>0$ and $r_0>0$ such that  system  
 \eqref{eq:gen_aug_sys} is ISS on $\Omega_{\widebar{R}}\cap(\mathbb{R}^{n_x}\times\widebar{B}(0,r_0))$. Moreover,  
 \begin{align}
 \limsup_{t\rightarrow+\infty}{\Vert e_t \Vert}\leq \frac{1+\gamma}{1-\gamma}\nu.   \label{eq:lim_error}  
 \end{align}
 
 \end{corollary}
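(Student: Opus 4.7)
The plan is to show that Corollary \ref{cor:ISS_aug_sys} is essentially a direct consequence of the estimates already delivered by Theorem \ref{th:ISS_aug_sys}, together with an elementary iteration of the scalar recursion on $\|e_t\|$. No new dynamical analysis is required; what is needed is only a careful verification that the hypotheses of Proposition \ref{prop:ISS} hold on the candidate set, and a geometric-series argument for the asymptotic error bound.

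For the ISS statement, I would first argue that $\Omega_{\widebar{R}}\cap(\mathbb{R}^{n_x}\times\widebar{B}(0,r_0))$ is a RPI set for system \eqref{eq:gen_aug_sys}. This is precisely the content of the invariance claims $\chi_t\in\Omega_{\widebar{R}}$ and $\|e_t\|\leq r_0$ appearing in \eqref{eq:th_bound_error}, which Theorem \ref{th:ISS_aug_sys} establishes inductively from any initial $\chi_{L-1}$ in that set. Next I would verify that $0$ lies in the interior of $\Omega_{\widebar{R}}\cap(\mathbb{R}^{n_x}\times\widebar{B}(0,r_0))$: since $V(0)=0$ and $\sigma(0)=0$, we have $W(0)=0<\widebar{R}$, and continuity of $W$ together with $r_0>0$ gives the interior property. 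Combined with the sandwich bound \eqref{eq:lya_bound_aug} on $W$ and the dissipation inequality \eqref{eq:th_lya_dissipative_aug}, this makes $W$ a regional ISS-Lyapunov function in the sense of Definition \ref{def:ISS_lya} on the set under consideration. Proposition \ref{prop:ISS} then yields the claimed regional ISS property.

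For the asymptotic bound \eqref{eq:lim_error}, I would iterate the scalar recursion \eqref{eq:th_bound_dyn_error}. By Proposition \ref{prop:existence_uniqueness}, the optimiser $p^*_{t+1}$ lies in $\widebar{B}(\bar{p},\nu)$ whenever it exists, so $\|p^*_{t+1}-\bar{p}\|\leq\nu$ for all $t\geq L-1$ under the regime ensured by Theorem \ref{th:ISS_aug_sys}. Substituting into \eqref{eq:th_bound_dyn_error} gives
\begin{equation*}
\|e_{t+1}\|\leq\gamma\|e_t\|+(1+\gamma)\nu,
\end{equation*}
and unrolling this relation from $t=L-1$ yields
\begin{equation*}
\|e_{t}\|\leq\gamma^{t-L+1}\|e_{L-1}\|+(1+\gamma)\nu\sum_{j=0}^{t-L}\gamma^{j}.
\end{equation*}
Since $0<\gamma<\tfrac12$ the first term vanishes as $t\to\infty$ and the geometric series converges to $(1-\gamma)^{-1}$, producing exactly the bound $\limsup_{t\to\infty}\|e_t\|\leq\frac{1+\gamma}{1-\gamma}\nu$.

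The only subtle point, and the part that deserves the most care, is the topological verification that $\Omega_{\widebar{R}}\cap(\mathbb{R}^{n_x}\times\widebar{B}(0,r_0))$ qualifies as a RPI set containing $0$ in its interior, because this intersection is not immediately handled by Definition \ref{def:ISS_lya} applied to the sublevel set of $W$ alone. The restriction $\|e\|\leq r_0$ is needed so that Assumption \ref{as:opti_algo} applies at every step, and this is exactly what Theorem \ref{th:ISS_aug_sys} was set up to propagate; everything else in the corollary is a bookkeeping consequence of what the theorem has already proved.
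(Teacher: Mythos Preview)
Your proposal is correct and follows essentially the same route as the paper's proof: the paper also derives the RPI property from \eqref{eq:th_bound_error} (and \eqref{eq:th_bound_dyn_error}), obtains ISS from \eqref{eq:lya_bound_aug}, \eqref{eq:th_lya_dissipative_aug} and Proposition \ref{prop:ISS}, and states that \eqref{eq:lim_error} is a direct consequence of \eqref{eq:th_bound_dyn_error}. Your write-up simply makes explicit the interior-point check and the geometric-series unrolling that the paper leaves implicit; note also that the bound $\|p^*_{t+1}-\bar{p}\|\leq\nu$ you invoke is already recorded in \eqref{eq:th_bound_error}, so you need not go back to Proposition \ref{prop:existence_uniqueness} for it.
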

 \begin{proof}
 From Equations \eqref{eq:th_bound_error} and \eqref{eq:th_bound_dyn_error}, it is clear that $\Omega_{\widebar{R}}\cap(\mathbb{R}^{n_x}\times\widebar{B}(0,r_0))$ is a RPI set. The ISS property follows from \eqref{eq:lya_bound_aug}, \eqref{eq:th_lya_dissipative_aug} and Proposition \ref{prop:ISS}. Equation \eqref{eq:lim_error} is a direct consequence of Equation \eqref{eq:th_bound_dyn_error}.
 \end{proof}

Theorem \ref{th:ISS_aug_sys} states that both the estimation error and the state of system \eqref{eq:gen_dyn_sys} are ultimately bounded by the magnitude of the measurement noise $\nu$.  Corollary \ref{cor:ISS_aug_sys} expresses this fact in terms of a Regional ISS property of system \eqref{eq:gen_aug_sys}. It also gives a more accurate ultimate bound on $e_t$  

\section{APPLICATION: BEARING-ONLY ACTIVE SLAM}\label{section:application}

In this section we consider a 2D robot represented by position variables $x=(x_1,x_2)$ and following a first order dynamics defined for any $t\in \mathbb{N}$ by:
\begin{align}
    x_{t+1}=x_t+\Delta u_t,\label{eq:dyn_simple_int}
\end{align}
where $u_t$ is a velocity input and $\Delta>0$. The unknown parameter in this context is the 2D position of a landmark $\bar{p}=(\bar{p}_1,\bar{p}_2)$. The state $x$ and the parameter $\bar{p}$ are supposed to be observed through noisy bearing angle measurements, denoted by $y$ and which can be written as follows for any $t\geq0$:
\begin{align}
    y_t=h(x_t,\bar{p})+v_t=\frac{\bar{p}-x_t}{\Vert \bar{p}-x_t\Vert}+v_t,\label{eq:obs_bearing}
\end{align}
with $\Vert v_t\Vert \leq \nu$. In this section, we focus on the sensor-centric view of Simultaneous Localisation and Mapping (SLAM)  in which the state of the system $x_t$ is supposed to be already estimated up to some level of precision. Therefore, one can assume, as in section \ref{section_setup}, that a noisy measurement of $x_t$, denoted by $x^0_t$, is available so that $ x^0_t=x_t+v^0_t$ with $\Vert v^0_t \Vert\leq \nu$. The estimation algorithm presented in Problem \eqref{eq:noisy_MHE} and Equation \eqref{eq:online_estimate} can be thought of as an online version of a Graph SLAM, see \cite{thrun_graph_2006}.   With this in mind, the controller \eqref{eq:total_control} can be seen as an Active SLAM controller aiming to ensure the good quality of  Algorithm \eqref{eq:online_estimate}  through the resolution of Problem \eqref{eq:MPC_obs} and especially Constraint \eqref{eq:con3}, see \cite{chen_active_2011} for more details. The controller $\kappa$ has be chosen as a smoothly saturated linear input and the Observability Grammian $\mathcal{O}$ can be written as follows for any $ t\geq L-1$:
\[     \mathcal{O}(\bar{p},x_{t+1,L},u_{t+1,L})=\sum_{k=t-L+1}^t H(x_k,\bar{p})H^T(x_k,\bar{p}),\]
where:
\begin{align*}
    H(x,\bar{p}) =\frac{1}{\Vert x-\bar{p}\Vert^2}\begin{bmatrix}x_2-\bar{p}_2\\ -(x_1-\bar{p}_1) \end{bmatrix}.
\end{align*}
The simulation has been carried out in MATLAB using the PENLAB toolbox for nonlinear Programming and Semi-Definite Programming to solve Problem \eqref{eq:noisy_MHE} and \eqref{eq:MPC_obs}, see \cite{fiala_penlab_2013}. Note that $\psi_t$ in this context represents $10$ iterations of the optimisation routine. Figure \ref{fig:traj} represents a horizontal trajectory where only the controller $\kappa$ has been applied and a trajectory resulting from \eqref{eq:total_control}. Figure \ref{fig:error} represents the corresponding estimation errors in the landmark position. Both the trajectories have been simulated with the following choice of parameters: $\bar{p}=(5,8)$, $\Delta=0.1$, $L=10$, $\nu=0.03$, $\delta'=1$, $\mu=0.5$, $c=0$, $\hat{p}_{L-1}=(3,10)$, $x_0=(5,10)$. One can see that in this case the horizontal trajectory does not allow the proper resolution of the MHPE problem and leads to a diverging estimate. On the contrary, the observability-seeking trajectory exhibits piece-wise circular behaviours which are known to ensure observability \cite{flayac_non-uniform_nodate}, \cite{shames_circumnavigation_2012} and leads to good estimation performance.

\begin{figure}[h]
\begin{center}
\includegraphics[scale=0.63]{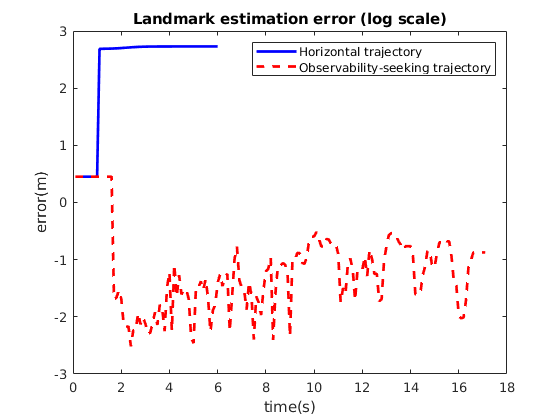}
\end{center}
\caption{Plot of the estimation error in the landmarks position for a horizontal trajectory and a observability-seeking trajectory   }
\label{fig:error}
\end{figure}

\begin{figure}[h]
\begin{center}
\includegraphics[scale=0.65]{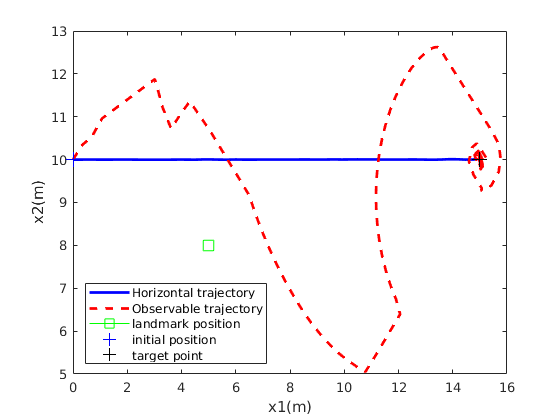}
\end{center}
\caption{Comparison between a horizontal trajectory and an observability-seeking trajectory obtained from the controller \eqref{eq:total_control} }
\label{fig:traj}
\end{figure}
\section{CONCLUSION}
In this paper,  an output-feedback algorithms for adaptive control based on a fast MHPE scheme and an MPC with a constraint on the Observability Grammian has proposed. The closed-loop stability of the system has been proved in terms of input-to-state stability of the augmented system composed of the original state and the parameter estimation error. The method has been numerically tested and validated on the nonlinear application of Active SLAM.



\bibliographystyle{plain}
\bibliography{bibfile}

\end{document}